\documentclass[11pt]{amsart}

\usepackage{amsmath}
\usepackage{mathrsfs}
\usepackage{amssymb}
\usepackage{amsthm}
\usepackage{bm}

\usepackage{unicode-math}
\usepackage{caption}
\usepackage{booktabs}
\usepackage{xcolor}
\usepackage{listings}
\usepackage[perpage]{footmisc}
\usepackage{xcolor}
\usepackage{hypdoc}

\definecolor{darkblue}{rgb}{0,0,0.4}
\definecolor{darkred}{rgb}{0.5,0,0}
\definecolor{darkgreen}{rgb}{0,0.4,0}
\hypersetup{
    colorlinks=true,
    linkcolor=darkblue,
    citecolor=darkred,
    filecolor=darkgreen,
    urlcolor=darkblue
}
\usepackage{graphicx}
\usepackage{latexsym,wasysym}
\usepackage{mathtools}
\usepackage{color}
\usepackage{url}
\usepackage{fontspec}
\usepackage{enumitem}

\newtheorem{thm}{Theorem}
\newtheorem{cor}[thm]{Corollary}
\newtheorem{lem}[thm]{Lemma}

\newtheorem{rem}[thm]{Remark}

\newcommand{\abs}[1]{\left\vert#1\right\vert}
\newcommand{\R}{\mathbb R}
\newcommand{\Real}{\mathbb R}

\newcommand{\set}[1]{\left\{#1\right\}}

\begin{document}

\title{On the blow-up of harmonic maps from surfaces to homogeneous manifolds}

\author{Hongcan Qian}
\author{Hao Yin}

\address{Hongcan Qian,  School of Mathematical Sciences,
University of Science and Technology of China, Hefei, China}
\address{Hao Yin,  School of Mathematical Sciences,
University of Science and Technology of China, Hefei, China}
\email{qhc001@mail.ustc.edu.cn}
\email{haoyin@ustc.edu.cn}
\thanks{The research work is supported by NSFC-12431003.}

\begin{abstract}
	We study harmonic map sequences from surfaces to compact homogeneous spaces. For sequences developing a single bubble, we derive refined asymptotic expansions in the neck region and prove new obstruction relations among the leading coefficients. These strengthen earlier results by converting an inequality into an equality. For weakly conformal maps, this yields geometric constraints: in low dimensions the tangent planes of the limit map and bubble must coincide, while in higher dimensions they are isoclinic.
\end{abstract}
\maketitle

\section{Introduction}
Suppose that $(M,g)$ and $(N,h)$ are two closed Riemannian manifolds. For a map $u$ from $M$ to $N$, the Dirichlet energy is defined to be
\[
	E(u)=\frac{1}{2}\int_M \abs{\nabla u}^2 dV_g.
\]
The critical points of $E$ are called harmonic maps. When the dimension of $M$ is $2$, this functional $E$ is conformally invariant and the theory of harmonic maps becomes particularly special. Most notably, there is the well-known energy concentration phenomenon for a sequence of harmonic maps with uniformly bounded energy \cite{1}. More precisely, for a sequence of harmonic maps $u_i$ from $M$ (of dimension two) to $N$ with uniformly bounded energy, $u_i$ converges smoothly away from finitely many energy concentration points. Near these points, rescaled sequences of $u_i$ converge to harmonic maps on $\Real^2$, known as bubbles. The annulus regions not captured by these limits are called neck regions. Extensive research has been conducted on the behavior of $u_i$ in the neck region\cite{2,4,9}.

This paper continues in that vein. Before stating our result we need to briefly review the history of this topic. 

For simplicity, we restrict ourselves to the case that $M=B_1$ is the unit ball in $\R^2$ with flat metric and assume that there is one and only one bubble. The general case can be reduced to this one bubble case by a routine induction (see \cite{2}). More precisely, let $u_i$ be a sequence of harmonic maps from $B_1$ to $N$ (isometrically embedded in $\R^l$) with uniformly bounded energy, satisfying

\begin{enumerate}[label=(\roman*)]
\item
 $u_i$ converges locally smoothly in $\bar{B}_1 \textbackslash \set{0}$ to a limit map $u_\infty$, which by the removable singularity theorem from Sacks and Uhlenbeck \cite{1}, can be extended to be a smooth harmonic map from $B_1$ to $N$;

\item

there is a sequence $\lambda_i$ going to zero such that $u_i(\lambda_i y)$ converges locally smoothly to a bubble map $\omega:\R^2 \to N$;

\item

 $\omega$ is the only bubble.
\end{enumerate}

The first nontrivial result is the so called energy identity (see \cite{2,3,4,5}):
\[
\lim_{\delta\to0}\lim_{i\to\infty}\int_{B_\delta\setminus B_{\delta^{-1}\lambda_i}} \abs{\nabla u_i}^2 dx =0,
\]
which means that all the energy of the sequence is accounted for in the limit.
A stronger result known as the no-neck theorem (see \cite{6,7,8}) implies that
\begin{equation}
	\label{eqn:noneck}
	\lim_{\delta\to0}\lim_{i\to\infty}\operatorname{osc}_{B_\delta\setminus B_{\delta^{-1}\lambda_i}}u_i=0,
\end{equation}
which means that the image of the bubble on one end of the neck touches the image of the bubble (or the weak limit) on the other end. The proofs of the above two results amount to showing some decay of the gradient of $u_i$ in the neck region.

In 2022, Yin \cite{yin} proved the following theorem, which gives an expansion of $u_i$ in the neck region. 
\begin{thm}
	\label{thm:yin}
	Let $\{u_i\}$ be a sequence of harmonic maps from $B_1$ to $N$ with uniformly bounded energy, satisfying conditions (i)-(iii) as above. There exist uniformly bounded (vector) coefficients $p_i, q_i, a_{i}, b_{i}, c_{i}$ and $d_{i}$ for any $\alpha\in (0,1)$, such that $u_i$ regarded as function of the cylinder coordinate has the following expansion on $[\log \lambda_i/\delta, \log \delta] \times S^1$,
\begin{equation}
\label{eqn:espansion}
\begin{split}
	u_i(t,\theta)& = p_i + q_i (t-\frac{1}{2}\log \lambda_i) + a_{i} e^t\cos \theta + b_{i} e^t\sin \theta \\
		     &+ c_{i} \lambda_i e^{-t}\cos \theta + d_{i} \lambda_i e^{-t}\sin \theta + O(\eta^{1+\alpha})
\end{split}
\end{equation}
    where $\eta=e^t+\lambda_ie^{-t}$ and $O(\eta^{1+\alpha})$ is some function $w$ satisfying 
    $$
    	\abs{\partial_t^k \partial_\theta^l w}\leq C_{k,l} \eta^{1+\alpha}.
    $$
    Moreover,
\begin{equation}
    \label{eqn:cor_poho}
    \abs{\abs{q_i}^2 -2\lambda_i (a_{i}\cdot c_{i} + b_{i}\cdot d_{i})} \leq C \lambda_i^{\alpha/2+1},\abs{a_i\cdot d_i - b_i\cdot c_i}\leq C \lambda_i^{\alpha/2}.
\end{equation}
\end{thm}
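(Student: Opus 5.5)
Passing to cylinder coordinates $x=e^{t+i\theta}$, the harmonic map equation on the neck $[\log\lambda_i/\delta,\log\delta]\times S^1$ becomes
\[
\partial_t^2 u_i+\partial_\theta^2 u_i = -A(u_i)(\nabla u_i,\nabla u_i),
\]
where the right-hand side is quadratic in $\nabla u_i$ with respect to the cylinder metric. The plan has three steps: obtain exponential decay of the gradient toward both ends, solve the linear problem mode by mode, and read off the two identities in \eqref{eqn:cor_poho} from conservation laws.

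\textbf{Step 1: decay.} Using the energy identity and the no-neck property \eqref{eqn:noneck}, I would first derive the standard three-circle (exponential decay) estimate
\[
|\nabla u_i|(t,\theta)\le C\eta^{\alpha'}
\]
on the neck, together with higher-derivative analogues from $\epsilon$-regularity, for some $\alpha'$ close to $1$. Initially one only gets a small exponent. It can then be bootstrapped: the nonlinearity is $O(\eta^{2\alpha'})$, and the nonzero Fourier modes of the linear problem decay like $e^{\pm t}$. The $\theta$-independent mode must be treated separately. Averaging the equation in $\theta$ gives $\frac{d^2}{dt^2}\bar u_i=O(\eta^{2\alpha'})$, so $\bar u_i$ is affine up to small error. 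Iterating improves the exponent until the first Fourier modes dominate.

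\textbf{Step 2: expansion.} With $|\nabla u_i|\le C\eta$ in hand (up to an $\epsilon$ loss), I would decompose $u_i=\sum_k u_{i,k}(t)e^{ik\theta}$. The mode equation is $u_{i,k}''-k^2u_{i,k}=f_k$ with $|f_k|\le C\eta^{2-\epsilon}$. Solving by variation of parameters, with boundary data at both ends, gives:
\begin{itemize}
\item the $k=0$ mode equal to $p_i+q_i(t-\tfrac12\log\lambda_i)$ up to an error $O(\eta^{2-\epsilon})$;
\item the $|k|=1$ modes equal to $a_ie^t+c_i\lambda_ie^{-t}$, and similarly with $b_i,d_i$, up to error;
\item the modes $|k|\ge2$ of size $O(\eta^2)$ by the maximum principle.
\end{itemize}
Choosing $\alpha<1$ absorbs all the errors into $O(\eta^{1+\alpha})$. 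The coefficients are uniformly bounded because, at $t=\log\delta$, they are controlled by $\|u_i\|_{C^1}$ of the limit $u_\infty$, and at the other end by the bubble $\omega$. The derivative estimates follow from interior Schauder estimates on unit cylinders.

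\textbf{Step 3: the identities (the main obstacle).} For the relations \eqref{eqn:cor_poho} I would use the conserved Hopf differential. For a harmonic map the quantity $\Phi=(|u_t|^2-|u_\theta|^2)-2i\,u_t\cdot u_\theta$ is holomorphic in $t+i\theta$. Its $\theta$-average is therefore an affine function of $t$, and the Pohozaev identity shows the average is actually constant in $t$. Substituting the expansion into $\int_{S^1}\Phi\,d\theta$ gives
\[
\int_{S^1}\Phi\,d\theta = 2\pi|q_i|^2-4\pi\lambda_i(a_i\cdot c_i+b_i\cdot d_i)+\text{(cross term)}\cdot i+\text{errors}.
\]
Here the mixed products $e^t\cdot\lambda_ie^{-t}$ produce the $\lambda_i$ terms, and the imaginary part produces $\lambda_i(a_i\cdot d_i-b_i\cdot c_i)$. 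The error terms are of order $\eta^{2+\alpha}$. Evaluating at the middle of the neck, $t=\tfrac12\log\lambda_i$ where $\eta\sim\lambda_i^{1/2}$, makes these errors $O(\lambda_i^{1+\alpha/2})$. Since the integral is constant in $t$, it can also be evaluated as $t\to\log\delta$, where it tends to the corresponding quantity for the smooth limit $u_\infty$ on a shrinking circle, which is $O(\delta^2)$. Hence both parts are $O(\lambda_i^{1+\alpha/2})$. Dividing the imaginary part by $\lambda_i$ yields the second estimate.

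The hardest point is bookkeeping the error cross terms, such as $q_i\cdot O(\eta^{1+\alpha})$, so that they stay below $\lambda_i^{1+\alpha/2}$. These terms integrate to zero over $\theta$ except for their zero-mode part, and that zero-mode part is $O(\eta^{2})$ only. I would handle this by sharpening the $k=0$ error via the averaged ODE, and by noting that $|q_i|\lesssim\lambda_i^{1/2}$ a posteriori.
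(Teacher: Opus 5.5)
The paper does not reprove Theorem~\ref{thm:yin}; it quotes it from \cite{yin}. There, the expansion comes from iterating the weighted solvability result Lemma~\ref{lem:key}, and \eqref{eqn:cor_poho} comes from inserting the expansion into the two Pohozaev identities at $t_0=\frac12\log\lambda_i$. Your architecture matches this, and your mode-by-mode variation of parameters is the Fourier version of that lemma. However, two steps fail as written.

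\textbf{The bootstrap in Step 1 never improves the zero mode.} Averaging gives $\partial_t\bar u_i=q_i+O(\eta^{2\alpha'})$. But the slope $q_i$ is only known to satisfy $\abs{q_i}\lesssim\lambda_i^{\alpha'/2}$, from the previous estimate at the middle of the neck. So the iteration reproduces $\abs{\nabla u_i}\lesssim\eta^{\alpha'}$ rather than improving it. Consequently the input of Step 2, $\abs{f_k}\le C\eta^{2-\epsilon}$, is unjustified: $f$ contains $A(u_i)(q_i,q_i)$, so you need $\abs{q_i}^2\lesssim\lambda_i^{1-\epsilon/2}$.

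No linear argument can supply this; this is exactly where the harmonic map structure must enter. For approximate harmonic maps such as Sacks--Uhlenbeck $\alpha$-harmonic maps, necks converging to nontrivial geodesics do occur. The missing ingredient is the Pohozaev identity used \emph{inside} the iteration: $\int_{S^1}\abs{u_t}^2d\theta=\int_{S^1}\abs{u_\theta}^2d\theta$ for every $t$. It gives $2\pi\abs{\partial_t\bar u_i}^2\le\int_{S^1}\abs{u_\theta}^2d\theta$. The right side involves only the nonzero modes, and these do improve at each step. Appealing to $\abs{q_i}\lesssim\lambda_i^{1/2}$ ``a posteriori'' at this stage is circular.

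\textbf{Step 3 justifies the smallness of the Hopf average incorrectly.} Evaluating at $t=\log\delta$ and passing to $u_\infty$ only yields $O(\delta^2)+o(1)$ as $i\to\infty$ with $\delta$ fixed. That says nothing about $\lambda_i^{1+\alpha/2}$. What is true, and what you need, is that the average vanishes identically for each $i$:
\begin{itemize}
\item $u_i$ is smooth on all of $B_1$, so its Hopf differential $\phi_i\,dz^2$ is holomorphic across $0$.
\item The $\theta$-average of $z^2\phi_i(z)$ is the $z^{-2}$ coefficient of $\phi_i$, hence zero.
\item Equivalently, apply the two Pohozaev identities on $B_{e^t}$, or let $t\to-\infty$ for fixed $i$.
\end{itemize}
Holomorphy also makes the average constant directly, not merely affine.

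With the exact identity $\int_{\{t_0\}\times S^1}\Phi\,d\theta=0$, your computation at $t_0$ goes through. The cross term $4\pi\,q_i\cdot\partial_t\bar w(t_0)$, where $\bar w$ is the $\theta$-average of the remainder, can then be handled without circularity. The identity yields $\abs{q_i}^2\le C\lambda_i+C\abs{q_i}\lambda_i^{(1+\alpha)/2}$. This gives $\abs{q_i}\le C\lambda_i^{1/2}$, and then both estimates in \eqref{eqn:cor_poho}.
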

\begin{rem}
Equation \eqref{eqn:espansion} differs from Theorem 1.1 of \cite{yin} in that we have replaced $q_i t$ by $q_i(t-\frac{1}{2}\log \lambda_i)$. Since $\abs{q_i}\leq C \lambda^{1/2}_i$, the difference is an infinitesimal quantity that can be absorbed into $p_i$ without affecting the limit of $p_i$.
\end{rem}

The coefficients in the above expansions for $u_i$ are naturally related to the expansion of the limit map $u_\infty$ at the origin and the bubble map $\omega$ at the infinity. More precisely, by taking subsequence if necessary, $p_i$ converges to $p_\infty$. Moreover,
$$
	\lim_{i\to \infty} a_i =a_\infty, \quad \lim_{i\to\infty} b_i=b_\infty,
$$
where $a_\infty$ and $b_\infty$ appear in the Taylor expansion of $u_\infty$ at the origin
\[
u_\infty= p_\infty+a_\infty x_1+b_\infty x_2 + o(\abs{x}).
\]
Similarly,
$$
	\lim_{i\to \infty} c_i =c_\infty, \quad \lim_{i\to\infty} d_i=d_\infty,
$$
where $c_\infty$ and $d_\infty$ appear in the expansion of the inverted bubble map $\tilde{\omega}$, defined by $\tilde{\omega}(y_1,y_2)= \omega(\frac{y_1}{\abs{y}^2},\frac{y_2}{\abs{y}^2})$, at the origin:
\[
\tilde{\omega}= p_\infty + c_\infty y_1 + d_\infty y_2 + o(\abs{y}).
\]
Taking the limit $i\to \infty$ in \eqref{eqn:cor_poho}, we obtain the following corollary:
\begin{cor}[Corollary 1.2 of \cite{yin}]
	\label{cor:notgood}
\begin{equation}
    \label{eqn:notgood}
a_\infty\cdot c_\infty + b_\infty \cdot d_\infty \geq 0; \quad a_\infty \cdot d_\infty - b_\infty\cdot c_\infty =0.
\end{equation}
\end{cor}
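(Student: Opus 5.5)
The plan is to pass to the limit $i\to\infty$ in the two estimates \eqref{eqn:cor_poho} of Theorem~\ref{thm:yin}, using that the coefficients converge to the Taylor data of $u_\infty$ and $\tilde\omega$. First, after passing to a subsequence, take the limits $a_i\to a_\infty$, $b_i\to b_\infty$, $c_i\to c_\infty$, $d_i\to d_\infty$ described above. We may assume these, since the coefficients are uniformly bounded. We also need to check that the limits are identified as claimed: compare the expansion \eqref{eqn:espansion} with $u_\infty$ on a fixed annulus $t\in[\log\delta-1,\log\delta]$, and with $\tilde\omega$ after rescaling by $\lambda_i$ near $t\approx\log\lambda_i-\log\delta$. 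In each region one of the two exponential families is $O(\lambda_i/\delta)$ (resp. $O(\delta)$-relative), so it is negligible. Moreover $|q_i|\le C\lambda_i^{1/2}$, so the log term contributes $|q_i||\log\lambda_i|\to0$. The $\cos\theta,\sin\theta$ Fourier modes of $u_i$ on these circles then converge to the first-order Taylor coefficients, up to $O(\delta^{\alpha})$ errors, and these vanish as $\delta\to0$.

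The second relation follows at once. From \eqref{eqn:cor_poho} we have $|a_i\cdot d_i-b_i\cdot c_i|\le C\lambda_i^{\alpha/2}$, and the right side tends to $0$. By continuity of the dot product, $a_\infty\cdot d_\infty-b_\infty\cdot c_\infty=0$.

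For the first relation, start from $\bigl|\,|q_i|^2-2\lambda_i(a_i\cdot c_i+b_i\cdot d_i)\bigr|\le C\lambda_i^{1+\alpha/2}$ and divide by $2\lambda_i>0$:
\[
a_i\cdot c_i+b_i\cdot d_i\ \ge\ \frac{|q_i|^2}{2\lambda_i}-\frac{C}{2}\lambda_i^{\alpha/2}\ \ge\ -\frac{C}{2}\lambda_i^{\alpha/2}.
\]
Letting $i\to\infty$ gives $a_\infty\cdot c_\infty+b_\infty\cdot d_\infty\ge0$. This argument only yields an inequality, because $|q_i|^2/\lambda_i$ is bounded but its limit is not determined. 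Note that the normalization of the log term in the Remark does not affect any of this.

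The only step requiring care is identifying the limits of $c_i,d_i$ with the Taylor coefficients of $\tilde\omega$. This needs the uniform control $O(\eta^{1+\alpha})$ on the full neck, together with the smallness of $q_i\log\lambda_i$. Since that identification is stated earlier in the paper, the corollary reduces to the two limit computations above.
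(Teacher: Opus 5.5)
Your proposal is correct and is essentially the paper's argument: the corollary is obtained by letting $i\to\infty$ in \eqref{eqn:cor_poho}. Dividing the first estimate by $2\lambda_i$ and dropping the nonnegative term $|q_i|^2/(2\lambda_i)$ gives the inequality, the second estimate gives the equality directly, and your extra sketch identifying the limits of $a_i,b_i,c_i,d_i$ with the Taylor data of $u_\infty$ and $\tilde\omega$ covers what the paper simply asserts (your remark that only an inequality survives because $|q_i|^2/\lambda_i$ is uncontrolled is exactly what Theorem \ref{thm:main} later improves via $|q_i|\le C\lambda_i$).
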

Note that if $a_\infty, b_\infty $ and $c_\infty,d_\infty$ are linearly independent respectively, then the tangent plane of the image of $u_\infty$ at the origin is spanned by $a_\infty$ and $b_\infty$, and the tangent plane of the bubble $\omega$ at the infinity is spanned by $c_\infty$ and $d_\infty$. Then the above restrictions \eqref{eqn:notgood} poses restrictions on how these two tangent planes meet.

While \eqref{eqn:notgood} is not enough to fix the position of the two planes in general, it does have strong consequences when $\dim N=3$ and the maps $u_i$ are weakly conformal. 

The restrictions in \eqref{eqn:notgood} could be understood in two directions. On one hand, it shows that certain blow-up configuration will not happen. On the other hand, it implies that an attempt to construct new harmonic maps by using the gluing method will not be successful unless \eqref{eqn:notgood} is satisfied. Therefore, we would like to call \eqref{eqn:notgood} an obstruction. In \cite{yin}, such obstructions were proved using both the expansion of $u_i$ and the Pohozaev identity.

In this paper, we seek to identify further obstructions of this type when the target manifold is homogeneous. This is a natural direction because, as is well known in the study of harmonic maps (and in variational methods more generally), symmetries lead to conservation laws. The two Pohozaev identities from which \eqref{eqn:cor_poho} is derived are related to the scaling and rotational invariance of the Dirichlet energy. 

As a compact homogeneous space, we may assume that the embedding of $N$ in $\Real^l$ is equivariant. More precisely, if $G$ is the isometry group of $N$, then $G$ is naturally a subgroup of $SO(l)$ and the Lie algebra $\mathfrak g$ is a Lie-subalgebra of $\mathfrak s\mathfrak o(l)$, i.e. skew-symmetric metrics.

\begin{thm}
	\label{thm:main}
	Suppose $N$ is a compact homogeneous space and $\{u_i\}$ is a sequence of harmonic maps satisfying (i)-(iii). For $a_\infty,b_\infty,c_\infty,d_\infty$ defined above, we have
	\begin{eqnarray}
		\label{eqn:abcd}
    a_\infty\cdot c_\infty + b_\infty \cdot d_\infty &=& 0\\
    \label{eqn:adbc}
    a_\infty\cdot d_\infty - b_\infty\cdot c_\infty &=&0.
	\end{eqnarray}
	Moreover, $\abs{q_i}\leq C\lambda_i$. 
\end{thm}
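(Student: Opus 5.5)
The plan is to use the conservation laws that come from the symmetries of $N$, together with the expansion of Theorem~\ref{thm:yin}. Equation \eqref{eqn:adbc} is already contained in Corollary~\ref{cor:notgood}. So the new content is the bound $\abs{q_i}\le C\lambda_i$, and \eqref{eqn:abcd} should then follow from \eqref{eqn:cor_poho}. Indeed, \eqref{eqn:cor_poho} gives $2\lambda_i(a_i\cdot c_i+b_i\cdot d_i)=\abs{q_i}^2+O(\lambda_i^{1+\alpha/2})$. Dividing by $\lambda_i$ and using $\abs{q_i}^2\le C\lambda_i^2$ gives $a_i\cdot c_i+b_i\cdot d_i\to 0$, which is \eqref{eqn:abcd}.

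\textbf{Step 1: Noether currents.} For every $X\in\mathfrak g\subset\mathfrak{so}(l)$ the vector field $Xu$ is tangent to $N$ along $u$. Since $\Delta u_i\perp T_{u_i}N$ and $\langle X\nabla u_i,\nabla u_i\rangle=0$ by skew-symmetry, the current satisfies $\operatorname{div}\langle Xu_i,\nabla u_i\rangle=0$. Each $u_i$ is smooth on all of $B_1$, so integrating over a disc shows that the flux through every circle vanishes exactly:
\[
F_X(t):=\int_{S^1}\langle Xu_i,\partial_t u_i\rangle\,d\theta=0 \qquad \text{for all } t .
\]

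\textbf{Step 2: evaluate the flux with the expansion.} Substitute \eqref{eqn:espansion} into $F_X(t)$ and integrate in $\theta$. Skew-symmetry kills $\langle Xq_i,q_i\rangle$, $\langle Xa_i,a_i\rangle$ and similar diagonal terms, and it turns $\langle Xc_i,a_i\rangle-\langle Xa_i,c_i\rangle$ into $2\langle Xc_i,a_i\rangle$. This should give
\[
0=2\pi\langle Xp_i,q_i\rangle+2\pi\lambda_i\bigl(\langle Xc_i,a_i\rangle+\langle Xd_i,b_i\rangle\bigr)+O(\eta^{2+\alpha}) .
\]
Choose $t=\tfrac12\log\lambda_i$, where $\eta\sim\lambda_i^{1/2}$, so the error term is $O(\lambda_i^{1+\alpha/2})$. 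Using the uniform bounds on the coefficients, this yields $\abs{\langle Xp_i,q_i\rangle}\le C\abs{X}\lambda_i$ for all $X\in\mathfrak g$.

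\textbf{Step 3: from the currents to $\abs{q_i}\le C\lambda_i$.} Homogeneity means $\{Xp_i: X\in\mathfrak g\}=T_{p_i}N$, and by compactness we can pick $X$ with $Xp_i=\pi^T(q_i)$ and $\abs{X}\le C\abs{q_i}$. Step 2 then gives $\abs{\pi^T q_i}^2\le C\lambda_i\abs{q_i}$.

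It remains to control the normal part $\pi^\perp q_i$ with respect to $T_{p_i}N$; I expect this to be the main obstacle. My first attempt is to use that the $\theta$-average $\bar u_i(t)=p_i+q_i(t-\tfrac12\log\lambda_i)+O(\eta^{1+\alpha})$ is an average of points of $N$. The second fundamental form then gives that the normal deviation of $\bar u_i(t)$ from $T_{p_i}N$ is bounded by the mean squared distance from $p_i$, namely $O(\abs{q_i}^2T^2+\eta^2)$ with $T=\abs{t-\tfrac12\log\lambda_i}$. Compare two times $t$ and $t'$ near the middle of the neck with $\abs{t-t'}\sim 1$. There $\eta^2\sim\lambda_i$, so the $\eta^2$ term is what bounds the difference, and this only gives $\abs{\pi^\perp q_i}\lesssim\lambda_i+\abs{q_i}^2\log^2\lambda_i$. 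Since $\abs{q_i}\le C\lambda_i^{1/2}$, that is not sharp enough as it stands.

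To close the gap I would refine this comparison. The $O(\eta^2)$ normal contributions are the quadratic terms $II(a_ie^t\cos\theta+\dots,\,\cdot)$, which are explicit. Their $\theta$-average is computable: the $e^{2t}$ and $\lambda_i^2e^{-2t}$ pieces are not constant in $t$, while the cross term is $\lambda_i\,II(a_i,c_i)+\lambda_i\,II(b_i,d_i)$, which is constant in $t$. Matching the linear-in-$t$ part of the normal component of $\bar u_i$ should then force $\pi^\perp q_i=O(\lambda_i)+O(\abs{q_i}^2)$. Combining the two estimates gives $\abs{q_i}^2\le C\lambda_i\abs{q_i}+C\lambda_i^2+C\abs{q_i}^4$, and since $\abs{q_i}\to0$ this implies $\abs{q_i}\le C\lambda_i$. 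Equation \eqref{eqn:abcd} then follows as explained at the start.
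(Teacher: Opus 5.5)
There is a genuine gap, and it is one of precision. Theorem~\ref{thm:yin} controls the remainder $w$ only to $O(\eta^{1+\alpha})$. At $t_0=\frac{1}{2}\log\lambda_i$ that is $O(\lambda_i^{(1+\alpha)/2})$, much larger than the accuracy $O(\lambda_i)$ you need.

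In Step 2 the error is not $O(\eta^{2+\alpha})$. The cross term $\langle Xp_i,\partial_t w\rangle$ survives the $\theta$-integration as $\langle Xp_i,\int_{S^1}\partial_t w\,d\theta\rangle$, which is only $O(\eta^{1+\alpha})$. So what you actually get is $\abs{\langle Xp_i,q_i\rangle}\le C\abs{X}\lambda_i^{(1+\alpha)/2}$.

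The same remainder defeats your normal argument. The average $\bar u_i(t)$ is known only up to $\bar w(t)=O(\eta^{1+\alpha})$, and you dropped this term when you said the $\eta^2$ term bounds the difference. Comparing two times near $t_0$ then gives only $\abs{\pi^\perp q_i}\lesssim\lambda_i^{(1+\alpha)/2}$. Your ``matching of the linear-in-$t$ part'' cannot do better, because $\bar w$ may itself contain a term $\ell s$, with $s=t-\frac{1}{2}\log\lambda_i$ and $\abs{\ell}\sim\lambda_i^{(1+\alpha)/2}$.

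In fact $\lambda_i^{(1+\alpha)/2}\abs{s}\le C\eta^{1+\alpha}$, and \eqref{eqn:cor_poho} is insensitive to such a shift. So the coefficient $q_i$ in \eqref{eqn:espansion} is only determined modulo $O(\lambda_i^{(1+\alpha)/2})$, and $\abs{q_i}\le C\lambda_i$ is false for some admissible choices of $q_i$. Once the errors are corrected, your Steps 2--3 give $\abs{q_i}\le C\lambda_i^{(1+\alpha)/2}=o(\lambda_i^{1/2})$. That is enough for \eqref{eqn:abcd}, but not for the sharp bound claimed in the theorem.

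The missing ingredient is the paper's Lemma~\ref{lem:higher}. The steps are:
\begin{enumerate}
\item Subtract the explicit $\tilde u$ solving $\triangle\tilde u=A(p)(\cdot,\cdot)$, evaluated on the first-order part of $\nabla u$. Every term of $\tilde u$ is $O(\eta^2)$, thanks to the shift $t-\frac{1}{2}\log\lambda$ and $\abs{q}\le C\sqrt\lambda$.
\item Apply Lemma~\ref{lem:key} to the $O(\eta^{3-\epsilon})$ remainder.
\item Re-choose the coefficients so the error is $O(\eta^2)$, i.e.\ $O(\lambda)$ on $\set{t_0}\times S^1$.
\end{enumerate}
With that, plus Lemma~\ref{lem:p} to place $p$ on $N$, your Step 2 goes through as written; it is exactly the paper's tangential argument.

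The normal part is then easier than your averaged-position approach. Average the pointwise identity $\partial_t u\cdot\nabla\varphi_j(u)=0$ over $\set{t_0}\times S^1$. Since $u-p$ and $\partial_t u$ are both $O(\sqrt\lambda)$ there, this gives $q\cdot\nabla\varphi_j(p)=O(\lambda)$ (Lemma~\ref{lem:smalltangent}).

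Alternatively, you could bypass the second-order expansion by normalizing $q_i:=\frac{1}{2\pi}\int_{S^1}\partial_t u_i(t_0,\theta)\,d\theta$, which is still admissible in \eqref{eqn:espansion}. The same two pointwise arguments with base point $u_i(t_0,0)\in N$ then give $\abs{q_i}\le C\lambda_i$ directly. As written, though, your proposal does neither, and the sharp normal estimate is left as an expectation rather than proved.
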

The theorem improves our previous understanding (Corollary \ref{cor:notgood}) in that under the additional assumptions of homogeneous $N$, we are now able to strengthen the first inequality to an equality. 

The following corollary explains the geometric meaning of \eqref{eqn:abcd} and \eqref{eqn:adbc} when $u_i$'s are weakly conformal. Recall that two planes $\pi_1, \pi_2$ in $\R^l$ are called {\bf isoclinic} if for any nonzero $v\in \pi_1$, the angle between $v$ and $\pi_2$ is independent of $v$.

\begin{cor}
	\label{cor:main}
	Let $u_i$ be as in Theorem \ref{thm:main}. We further assume that $u_i$'s are weakly conformal and that none of $a_\infty, b_\infty, c_\infty$ and $d_\infty$ is zero. Let $\pi_1$ be the oriented plane spanned by $a_\infty,b_\infty$ and $\pi_2$ be the oriented plane spanned by $c_\infty,d_\infty$. Then 
\begin{itemize}
	\item If $\dim N=2$ or $3$, then these two planes are the same plane with opposite orientations.
\item If $\dim N\ge 4$, these two planes are isoclinic. \end{itemize}
\end{cor}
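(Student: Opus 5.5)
The plan is to reduce the corollary to linear algebra. The inputs are Theorem \ref{thm:main}, the conformality relations that weak conformality imposes on $a_\infty,b_\infty,c_\infty,d_\infty$, and the fact that all four vectors lie in the single tangent space $T_{p_\infty}N$. Write $a,b,c,d$ for $a_\infty,\dots,d_\infty$.

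First I record the conformality relations and the common tangent space. Since $u_i$ is weakly conformal and converges smoothly away from $0$, the limit $u_\infty$ is weakly conformal. Its differential at the origin is $(a,b)$, so $a\cdot b=0$ and $\abs{a}=\abs{b}=:s>0$. Similarly, $\omega$ is a smooth limit of rescalings of $u_i$, and inversion is conformal, so $\tilde\omega$ is weakly conformal near $0$. Its differential at $0$ is $(c,d)$, so $c\cdot d=0$ and $\abs{c}=\abs{d}=:r>0$. Moreover $u_\infty(0)=\tilde\omega(0)=p_\infty$, so $a,b,c,d\in T_{p_\infty}N$. Thus $\pi_1,\pi_2$ are genuine $2$-planes inside $T_{p_\infty}N$, with orthogonal bases $(a,b)$ and $(c,d)$ respectively.

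Next I compute the projection onto $\pi_2$. Set $x=a\cdot c$ and $y=a\cdot d$. By \eqref{eqn:abcd} and \eqref{eqn:adbc}, $b\cdot d=-x$ and $b\cdot c=y$. For $v=\alpha a+\beta b\in\pi_1$ we get $v\cdot c=\alpha x+\beta y$ and $v\cdot d=\alpha y-\beta x$. Hence
\[
\abs{P_{\pi_2}v}^2=\frac{(v\cdot c)^2+(v\cdot d)^2}{r^2}=\frac{(\alpha^2+\beta^2)(x^2+y^2)}{r^2},\qquad \abs{v}^2=(\alpha^2+\beta^2)s^2 .
\]
So $\abs{P_{\pi_2}v}/\abs{v}=\sqrt{x^2+y^2}/(rs)$ is independent of $v$, i.e. $\pi_1$ and $\pi_2$ are isoclinic. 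This proves the case $\dim N\ge 4$, and in fact holds in every dimension.

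For $\dim N=2$ or $3$, I use that $\pi_1$ and $\pi_2$ are $2$-planes in $T_{p_\infty}N$, whose dimension is at most $3$. They therefore share a nonzero vector $v$, for which the angle to $\pi_2$ is $0$. By isoclinicity every vector of $\pi_1$ then lies in $\pi_2$, so $\pi_1=\pi_2$ as unoriented planes, and $x^2+y^2=r^2s^2>0$. For the orientation, expand in the common plane:
\[
c=\frac{x\,a+y\,b}{s^2},\qquad d=\frac{y\,a-x\,b}{s^2}.
\]
The change-of-basis matrix from $(a,b)$ to $(c,d)$ is $s^{-2}\begin{pmatrix}x&y\\ y&-x\end{pmatrix}$, with determinant $-(x^2+y^2)/s^4<0$. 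Hence the orientations of $\pi_1$ and $\pi_2$ are opposite.

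The only point needing care is the first step: justifying that $u_\infty$ and $\tilde\omega$ inherit weak conformality. This holds because the Hopf differential condition $\abs{\partial_1 u}^2-\abs{\partial_2 u}^2=0=\partial_1u\cdot\partial_2u$ passes to smooth limits and is invariant under conformal changes of the domain. Everything after that is the elementary computation above.
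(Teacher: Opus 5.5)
Your proof is correct, but it is organized differently from the paper's.

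The paper works in coordinates on $T_{p_\infty}N$. It normalizes $a=(1,0,\dots)$, $b=(0,1,\dots)$ and reads off $c_1=-d_2$, $c_2=d_1$ from the two obstruction relations. It then splits into cases:
\begin{itemize}
\item for $n=3$, it uses $\abs{c}=\abs{d}$ and $c\cdot d=0$ to force $c_3=d_3=0$;
\item for $n\ge 4$, it reduces to a four-dimensional subspace and projects $v=sc+td\in\pi_2$ onto $\pi_1$.
\end{itemize}

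You instead prove isoclinicity once, coordinate-free, without normalizing lengths and in every dimension. You project $\pi_1$ onto $\pi_2$, which matches the stated definition literally; for equidimensional planes the two directions are equivalent. You then get the low-dimensional statement as a consequence: two $2$-planes in a space of dimension at most $3$ meet nontrivially, so the common cosine equals $1$ and $\pi_1=\pi_2$.

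Your route gives a uniform argument with no case split and no reduction to $n=4$. It also gives an explicit value $\sqrt{x^2+y^2}/(rs)$ for the common angle. It shows that the only special feature of dimension at most $3$ is this forced intersection.

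It also makes explicit two points the paper only asserts or leaves implicit:
\begin{itemize}
\item $\tilde\omega$ inherits weak conformality through the anticonformal inversion, and $a,b,c,d$ all lie in $T_{p_\infty}N$;
\item the orientation reversal follows from the change-of-basis determinant $-(x^2+y^2)/s^4<0$, whereas the paper leaves it implicit in $c_1=-d_2$, $c_2=d_1$.
\end{itemize}

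The paper's coordinate computation, for its part, shows concretely how the conformality relations eliminate the components of $c,d$ orthogonal to $\pi_1$ in dimension three.
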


\section{Proof of the main theorem}

By \eqref{eqn:cor_poho} in Theorem \ref{thm:yin}, we have
$$
\abs{q_i}\leq C \sqrt{\lambda_i}.
$$
The proof of \eqref{eqn:abcd} and \eqref{eqn:adbc} in Theorem \ref{thm:main} follows from \eqref{eqn:cor_poho} provided we can show that $q_i$ is $o(\lambda_i^{1/2})$. In fact, we will prove a stronger estimate
\begin{equation}
	\label{eqn:qi}
	\abs{q_i}\leq C\lambda_i.
\end{equation}

The proof of \eqref{eqn:qi} consists of two parts. In the first part, we estimate the normal part of $q_i$ (with respect to the tangent space of $N$ at $p_\infty$). To show that the normal part of $q_i$ is bounded by $C\lambda_i$, we need an expansion of $u_i$ up to order $2$ in the neck (see Lemma \ref{lem:higher}).
For the second part, we need first to establish a conservation law for harmonic maps into homogeneous manifold (see Lemma \ref{lem:law}). Then we plug the new expansion into the conservation law (see \eqref{eqn:new}) to get the desired estimate for the tangent part of $q_i$.

{\bf Notation. }For simplicity, we omit the subscript $i$ most of the time in this section.

\subsection{The normal part}

Let $u$ be as in Theorem \ref{thm:main}. By Theorem \ref{thm:yin}, for any $\epsilon>0$, we have the expansion
\begin{equation}
	\label{eqn:newex}	
	\begin{split}
		u(t,\theta)=& p + {q(t-\frac{1}{2}\log \lambda)}+  ae^t \cos \theta + b e^t\sin \theta  \\
			    & + c \lambda e^{-t} \cos\theta + d\lambda e^{-t}\sin \theta + O(2-\epsilon).
	\end{split}
\end{equation}
Here $O(r)$ is short for $O(\eta^r)$ as defined in Theorem \ref{thm:yin}. 

Theorem \ref{thm:yin} is an expansion of $u_i$ up to error $O(2-\epsilon)$. In Lemma \ref{lem:higher}, we improve the error to $O(2)$. In fact, the proof yields a stronger expansion to the order $3-\epsilon$. However, the current form suffices for the proofs in this paper. The proof of Lemma \ref{lem:higher} uses a key lemma from \cite{yin} which we recall:

\begin{lem}[Lemma 3.1 in \cite{yin}]
	\label{lem:key}
	Suppose that $f$ is defined on $[\log \lambda-\log \delta, \log \delta]$ with $\abs{f}\leq C_1 \eta^\alpha$ for some $0<\alpha\notin \mathbb N$. Then we can find a solution $\triangle v =f$ such that 
	$$
	\abs{v}\leq C_2 \eta^\alpha \qquad \text{on} \quad [\log \lambda -\log\delta, \log\delta] \times S^1.
	$$
	Here $C_2$ depends on $C_1$, but not on $\lambda$.
\end{lem}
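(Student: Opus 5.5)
The plan is to reduce the equation to ordinary differential equations by Fourier decomposition in $\theta$, and then solve each mode with a Green's function chosen to respect the two-sided weight $\eta^\alpha$. On the cylinder, $\triangle=\partial_t^2+\partial_\theta^2$. Write $f(t,\theta)=\sum_{k\in\mathbb Z} f_k(t)e^{ik\theta}$. Then $\abs{f_k(t)}\le \sup_\theta\abs{f(t,\cdot)}\le C_1\eta(t)^\alpha$ uniformly in $k$, and we look for $v=\sum_k v_k(t)e^{ik\theta}$ with
\[
v_k''-k^2v_k=f_k .
\]
Note that $\eta=e^t+\lambda e^{-t}$ is comparable to $e^t$ for $t\ge t_0:=\tfrac12\log\lambda$ and to $\lambda e^{-t}$ for $t\le t_0$. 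Accordingly, split each $f_k=f_k^++f_k^-$, where $f_k^+=f_k\chi_{\{t\ge t_0\}}$ and $f_k^-=f_k\chi_{\{t<t_0\}}$. Then $\abs{f_k^+}\le 2C_1e^{\alpha t}$ and $\abs{f_k^-}\le 2C_1\lambda^\alpha e^{-\alpha t}$. By linearity it suffices to treat $f_k^+$; the piece $f_k^-$ is handled by the same argument after the reflection $t\mapsto \log\lambda - t$, which exchanges $e^t$ with $\lambda e^{-t}$.

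\emph{High modes $\abs{k}>\alpha$.} Use the decaying Green's function
\[
v_k(t)=-\frac{1}{2\abs{k}}\int e^{-\abs{k}\abs{t-s}}f_k^+(s)\,ds .
\]
Split the integral into $s<t$ and $s>t$ (with $s\le\log\delta$). The part $s<t$ gives at most $C e^{\alpha t}/(\abs{k}+\alpha)$. The part $s>t$ converges because $\abs{k}>\alpha$ and gives at most $Ce^{\alpha t}/(\abs{k}-\alpha)$. For $t<t_0$ only the part $s>t_0$ contributes, and it is bounded by $Ce^{-\abs{k}(t_0-t)}e^{\alpha t_0}/(\abs{k}-\alpha)\le C\lambda^{\alpha/2}e^{\alpha(t-t_0)}\cdots$. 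Here we use $e^{-\abs{k}(t_0-t)}\le e^{-\alpha(t_0-t)}$, and the result is $\le C\lambda^\alpha e^{-\alpha t}\le C\eta^\alpha$. Altogether $\abs{v_k}\le C C_1\eta^\alpha/(\abs{k}(\abs{k}-\alpha))$. The sum over $\abs{k}>\alpha$ therefore converges absolutely and uniformly, with a constant independent of $\lambda$. Since $\alpha\notin\mathbb N$, the quantity $\abs{k}-\alpha$ stays bounded away from $0$.

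\emph{Low modes $\abs{k}<\alpha$.} There are finitely many, $k=0$ included. Here the two-sided Green's function fails, because $\int_t^{\log\delta}e^{-\abs{k}(s-t)}e^{\alpha s}ds$ is dominated by the upper endpoint rather than by $e^{\alpha t}$. Instead we integrate from $t_0$ outward. For $k\neq0$ set
\[
v_k(t)=\int_{t_0}^t\frac{\sinh(k(t-s))}{k}f_k^+(s)\,ds\quad (t\ge t_0),\qquad v_k=0\quad (t<t_0),
\]
and for $k=0$ set $v_0(t)=\int_{t_0}^t(t-s)f_0^+(s)\,ds$. Since $\abs{k}<\alpha$, we get
\[
\abs{v_k(t)}\le C\int_{t_0}^te^{\abs{k}(t-s)}e^{\alpha s}ds\le \frac{C}{\alpha-\abs{k}}e^{\alpha t}\le C\eta^\alpha .
\]
The case $k=0$ gives $\le Ce^{\alpha t}/\alpha^2$ similarly. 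The function $v_k$ is $C^1$ across $t_0$ with vanishing data there, so it solves the ODE, and again the constants do not involve $\lambda$.

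Summing all the modes (and the reflected $f^-$ contributions) gives a solution of $\triangle v=f$ with $\abs{v}\le C_2\eta^\alpha$, where $C_2$ depends on $C_1$ and $\alpha$ only. If $f$ is merely bounded, the equation holds in the weak sense, and it holds classically when $f$ is smooth. I expect the main obstacle to be the bookkeeping in the low/high split. One must choose, mode by mode, a direction of integration compatible with the weight on both ends of the neck: the high modes decay from both sides, while the low modes must be anchored at the crossover $t_0=\tfrac12\log\lambda$. It is exactly here that $\alpha\notin\mathbb N$ is needed, since it keeps the factors $(\abs{k}-\alpha)^{-1}$ finite; without it one would lose a factor of $\abs{t}$, i.e. of $\log\lambda$.
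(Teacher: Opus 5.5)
Your proof is correct and takes essentially the mode-by-mode ODE route behind this lemma; the present paper gives no proof and only quotes it from \cite{yin}. The ingredients are Fourier decomposition in $\theta$, the two-sided decaying Green's function for the modes $\abs{k}>\alpha$, integration anchored at $t_0=\frac12\log\lambda$ for the finitely many modes $\abs{k}<\alpha$, and $\alpha\notin\mathbb N$ to exclude the resonant case $\abs{k}=\alpha$. There are only cosmetic slips: the bounds on $f_k^\pm$ should carry $2^\alpha C_1$ rather than $2C_1$, and your intermediate expression $C\lambda^{\alpha/2}e^{\alpha(t-t_0)}$ is just $Ce^{\alpha t}\le C\eta^\alpha$, which already suffices for $t<t_0$.
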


\begin{lem}
	\label{lem:higher}
	Let $u$ be as above. Then there exist constant vectors $q, p, a, b, c$ and $d$ such that
	\begin{equation}
		\label{eqn:order2exp}
		\begin{split}
		u(t,\theta)=&p+ q(t-\frac{1}{2}\log \lambda) + ae^t \cos \theta + b e^t\sin \theta  \\
		   & + c \lambda e^{-t} \cos\theta + d\lambda e^{-t}\sin \theta + O(2).
		\end{split}
	\end{equation}
\end{lem}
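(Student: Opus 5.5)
The plan is to bootstrap the expansion \eqref{eqn:newex} once through the harmonic map equation. The only obstruction is that the natural exponent $2$ is an integer, so Lemma \ref{lem:key} cannot be applied directly. I will therefore compute the order-$2$ part of the nonlinearity explicitly, solve for it by hand, and leave only a remainder of non-integer order $3-\epsilon'$ for Lemma \ref{lem:key}. Throughout I use $s=t-\frac12\log\lambda$, so that $e^t=\lambda^{1/2}e^s$, $\lambda e^{-t}=\lambda^{1/2}e^{-s}$ and $\eta\approx\lambda^{1/2}e^{|s|}$. In cylinder coordinates the equation reads $\triangle u=f:=-A(u)(\partial_t u,\partial_t u)-A(u)(\partial_\theta u,\partial_\theta u)$, where $\triangle=\partial_t^2+\partial_\theta^2$, and \eqref{eqn:newex} gives $f=O(2-\epsilon)$.

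\textbf{Step 1: isolate the quadratic part of $f$.} Let $\ell$ denote the explicit part of \eqref{eqn:newex}, namely $p+qs+ae^t\cos\theta+\dots+d\lambda e^{-t}\sin\theta$. I have $u-p=O(1-\epsilon)$; the term $qs$ is fine here because $|q|\le C\lambda^{1/2}$, so $\lambda^{1/2}|s|\le C\eta^{1-\epsilon}$. Also $\nabla u=\nabla\ell+O(2-\epsilon)$ with $\nabla \ell=O(1)$. Hence
\[
f=F+O(3-2\epsilon),\qquad F=-A(p)(\partial_t\ell,\partial_t\ell)-A(p)(\partial_\theta\ell,\partial_\theta\ell).
\]
Expanding $F$ gives the following terms.
\begin{itemize}
\item The $e^{2t}$ terms combine to $-e^{2t}\big(A(a,a)+A(b,b)\big)$, with no $\cos2\theta,\sin2\theta$ components; this is just the statement that $|\nabla_x\ell|^2$ is constant.
\item Likewise the $\lambda^2e^{-2t}$ terms are a pure constant times $\lambda^2e^{-2t}$.
\item In the cross terms of order $\lambda$, the constant parts cancel. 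What remains is $\lambda\big(A(b,d)-A(a,c)\big)\cos2\theta-\lambda\big(A(a,d)+A(b,c)\big)\sin2\theta$.
\item The terms involving $q$ are $-A(q,q)$, of size $\lambda$, and $q$ crossed with $ae^t\cos\theta$, $c\lambda e^{-t}\cos\theta$, etc. The latter are of size $\lambda e^{\pm s}$ in the modes $\cos\theta,\sin\theta$.
\end{itemize}

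\textbf{Step 2: solve $\triangle w=F$ explicitly with $w=O(2)$.} I do this term by term.
\begin{itemize}
\item For the $e^{2t}$ term I use a constant times $e^{2t}$, since $\triangle e^{2t}=4e^{2t}$; similarly $\triangle e^{-2t}=4e^{-2t}$ handles the $\lambda^2 e^{-2t}$ term.
\item For $\lambda\cos2\theta$ and $\lambda\sin2\theta$ I use $-\frac14\lambda\cos2\theta$ and $-\frac14\lambda\sin2\theta$ times the corresponding coefficients.
\item For the constant $\lambda$-term I use $\frac12 s^2$ times it, which is $O(\lambda s^2)=O(2)$.
\item The resonant $q$-terms $\lambda e^{\pm s}\cos\theta$ need particular solutions of the form $\pm\frac12\lambda s e^{\pm s}\cos\theta$, and these are $O(2)$ because $|s|e^{|s|}\le Ce^{2|s|}$.
\end{itemize}
This last check is exactly why the variable $s$ (i.e.\ $t-\frac12\log\lambda$) is the right one. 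I expect this step to be the crux: one has to verify that no resonant term appears at order exactly $e^{\pm2t}$ with an angular mode $\pm2$. Such a term would force $te^{2t}$ growth, and the computation above shows it does not arise.

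\textbf{Step 3: the remainder and the harmonic part.} By Lemma \ref{lem:key} with the non-integer exponent $3-2\epsilon$, I obtain $v=O(3-2\epsilon)$ solving $\triangle v=f-F$. Then $h=u-w-v$ is harmonic on the cylinder $[\log\lambda-\log\delta,\log\delta]\times S^1$, and $h=\ell+O(2-\epsilon)$. I expand $h$ in Fourier modes. For modes $k\ge2$, the components $\alpha_ke^{kt}+\beta_ke^{-kt}$ are bounded at the two ends by the boundary values, so on a slightly smaller annulus (relabeling $\delta$) their sum is $O(e^{2t}+\lambda^2e^{-2t})\subset O(2)$. Modes $0$ and $1$ are exactly of the form $p'+q's+a'e^t\cos\theta+\dots+d'\lambda e^{-t}\sin\theta$. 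Comparing with \eqref{eqn:newex} shows that the new coefficients differ from the old ones only by amounts that keep them uniformly bounded. Finally, the derivative bounds $|\partial_t^k\partial_\theta^l w|\le C\eta^2$ follow from interior elliptic estimates on unit cylinders, rescaled by $\eta$, which is comparable to a constant on each unit piece. This gives \eqref{eqn:order2exp}.
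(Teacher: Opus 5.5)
Your proposal is correct and follows the paper's proof essentially step by step. The paper also replaces $A(u)(\nabla u,\nabla u)$ by $A(p)$ applied to the first-order part of $\nabla u$, up to $O(3-\epsilon)$, and writes down the same explicit particular solution $\tilde u$, including the $\frac12 A(p)(q,q)(t-\frac12\log\lambda)^2$ and $(t-\frac12\log\lambda)e^{\pm t}$ terms for the resonant pieces. It then checks $\tilde u=O(2)$, applies Lemma \ref{lem:key} to the remainder, and absorbs the modes $k\ge 2$ of the harmonic part into $O(2)$ as in Lemma 4.3 of \cite{yin}.

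The only slip is cosmetic. The $\lambda\cos 2\theta$ and $\lambda\sin 2\theta$ coefficients you list for $F$ are off by a constant factor ($-2$ in your sign convention), which does not affect the argument.
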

\begin{rem}
	\label{rem:name}
	One may want to compare the coefficients in \eqref{eqn:newex} and \eqref{eqn:order2exp}. We do not claim that they are the same. Indeed, coefficients in such expansions are only determined up to a small error. Obviously, the set of coefficients in \eqref{eqn:order2exp} is valid for \eqref{eqn:newex} and the reverse may not be true. 
\end{rem}

\begin{proof}
	The first step is to plug the known expansion \eqref{eqn:newex} into the right-hand side of the harmonic map equation. By ``1st order of $\nabla u$", we mean 
	$$
	\nabla\left( q(t-\frac{1}{2}\log \lambda) + ae^t \cos \theta + b e^t\sin \theta + c \lambda e^{-t} \cos\theta + d\lambda e^{-t}\sin \theta \right).
	$$
	Direct computation shows
\begin{eqnarray*}
A(u)(\nabla u, \nabla u)&=&A(p)( \text{1st order of $\nabla u$}, \text{1st order of $\nabla u$}) + O(3-\epsilon)\\
&=&A(p)(q,q)+A(p)(a,a)e^{2t}+A(p)(b,b)e^{2t}\\
&&+A(p)(c,c)\lambda^2e^{-2t}+A(p)(d,d)\lambda^2e^{-2t}+2A(p)(q,a)e^t\cos\theta\\
&&+2A(p)(q,b)e^t\sin\theta-2A(p)(q,c)\lambda e^{-t}\cos\theta-2A(p)(q,d)\lambda e^{-t}\sin \theta\\
&&-2A(p)(a,c)\lambda\cos{2\theta}-2A(p)(a,d)\lambda\sin{2\theta}\\
&&-2A(p)(b,c)\lambda\sin{2\theta}+2A(p)(b,d)\lambda\cos{2\theta}+O(3-\epsilon).
\end{eqnarray*}
Setting
\begin{eqnarray*} 
\tilde u&=&\frac12A(p)(q,q)(t-\frac{1}{2}\log \lambda)^2+\frac14A(p)(a,a)e^{2t}+\frac14A(p)(b,b)e^{2t}\\
&&+\frac14A(p)(c,c)\lambda^2e^{-2t}+\frac14A(p)(d,d)\lambda^2e^{-2t} \\
&& +A(p)(q,a)(t-\frac{1}{2}\log \lambda)e^t\cos\theta+A(p)(q,b)(t-\frac{1}{2}\log \lambda)e^t\sin\theta \\
&&+A(p)(q,c)\lambda (t-\frac{1}{2}\log \lambda)e^{-t}\cos\theta +A(p)(q,d)\lambda (t-\frac{1}{2}\log \lambda)e^{-t}\sin\theta \\
&&+\frac12A(p)(a,c)\lambda\cos{2\theta}+\frac12A(p)(a,d)\lambda\sin{2\theta}\\
&&+\frac12A(p)(b,c)\lambda\sin{2\theta}-\frac12A(p)(b,d)\lambda\cos{2\theta},
\end{eqnarray*} 
we have 
\begin{eqnarray*} 
\triangle\tilde u&=&A(p)(q,q)+A(p)(a,a)e^{2t}+A(p)(b,b)e^{2t}\\
&&+A(p)(c,c)\lambda^2e^{-2t}+A(p)(d,d)\lambda^2e^{-2t}+2A(p)(q,a)e^t\cos\theta\\
&&+2A(p)(q,b)e^t\sin\theta-2A(p)(q,c)e^{-t}\lambda\cos\theta-2A(p)(q,d)e^{-t}\lambda\sin \theta\\
&&-2A(p)(a,c)\lambda\cos{2\theta}-2A(p)(a,d)\lambda\sin{2\theta}\\
&&-2A(p)(b,c)\lambda\sin{2\theta}+2A(p)(b,d)\lambda\cos{2\theta}.
\end{eqnarray*}
Then by the harmonic map equation of $u$, we get
$$
	\triangle (u-\tilde{u}) = O(3-\epsilon).
$$
By Lemma \ref{lem:key}, 
$$
	u = \tilde{u} + h + O(3-\epsilon)
$$
where $h$ is a harmonic function.

Claim: $\tilde{u}=O(2)$. To see this, we note the following facts
\begin{itemize}
	\item $\lambda=O(2)$;
	\item $q = O(1)$;
	\item $q (t-\frac{1}{2}\log \lambda)= O(1)$.
\end{itemize}
Similar to Lemma 4.3 of \cite{yin}, we have bounds on the coefficients of the expansion of $h$ so that we can absorb higher order terms (no less then order $2$) into $O(2)$. This concludes the proof of \eqref{eqn:order2exp}. Note that we use the same set of symbols for the coefficients as in \eqref{eqn:newex} and the reason is explained in Remark \ref{rem:name}.
\end{proof}

Intuitively, we expect $p\in N$. However, this does not follow from the proof. To address this issue, we prove the next lemma.
\begin{lem}
	\label{lem:p}
	Let $u$ be as in Lemma \ref{lem:higher}. Then
	$$
	{\rm dist}(p,N) \leq C\lambda.
	$$
\end{lem}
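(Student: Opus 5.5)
The idea is to evaluate the expansion \eqref{eqn:order2exp} at the point of the neck where $\eta$ is smallest, and use that $u$ itself takes values in $N$. Set $t_0=\frac12\log\lambda$, the midpoint of $[\log\lambda-\log\delta,\log\delta]$. There $e^{t_0}=\lambda e^{-t_0}=\lambda^{1/2}$, so $\eta(t_0)=2\lambda^{1/2}$. Also $q(t_0-\frac12\log\lambda)=0$, which is exactly why the linear term was normalized as in the Remark. Hence, for every $\theta$,
\[
u(t_0,\theta)=p+\lambda^{1/2}\bigl((a+c)\cos\theta+(b+d)\sin\theta\bigr)+O(\lambda).
\]
Here the error $O(2)=O(\eta^2)$ is bounded by $C\eta(t_0)^2=4C\lambda$.

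The linear-in-$\lambda^{1/2}$ part does not vanish pointwise, so I would average over the circle to kill it. Writing $\bar u(t_0)=\frac1{2\pi}\int_{S^1}u(t_0,\theta)\,d\theta$, we get $\abs{\bar u(t_0)-p}\le C\lambda$, since $\cos\theta$ and $\sin\theta$ have mean zero. It therefore suffices to show ${\rm dist}(\bar u(t_0),N)\le C\lambda$.

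For this, note that $u(t_0,\theta)\in N$ for every $\theta$. Moreover $u(t_0,\theta)-\bar u(t_0)=\lambda^{1/2}v(\theta)+O(\lambda)$ with $v(\theta)=(a+c)\cos\theta+(b+d)\sin\theta$, and $a,b,c,d$ are uniformly bounded. Fix $\theta_1=0$ and $\theta_2=\pi$, so that $v(\pi)=-v(0)$. This gives $\frac12\bigl(u(t_0,0)+u(t_0,\pi)\bigr)=\bar u(t_0)+O(\lambda)$. The two points $x_\pm=u(t_0,0),u(t_0,\pi)\in N$ are at distance $O(\lambda^{1/2})$ from each other. For a compact smooth submanifold $N\subset\R^l$, the midpoint of two points of $N$ at distance $r$ lies within $Cr^2$ of $N$: the chord deviates from the tangent plane by second-order terms, controlled by the second fundamental form. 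Concretely, use the nearest point projection $\Pi$ near $N$ and a Taylor expansion, or write $x_\pm=\exp$-type parametrizations over $T_{x_+}N$, which gives normal displacement $O(r^2)$. With $r=O(\lambda^{1/2})$, the midpoint is within $C\lambda$ of $N$. Combining the three estimates gives ${\rm dist}(p,N)\le C\lambda$.

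The main obstacle is the last geometric step, but it is standard given uniform bounds on the second fundamental form of $N$. The delicate point is rather bookkeeping. One must check that the $O(2)$ remainder from Lemma \ref{lem:higher} is genuinely uniform in $i$, so that $C$ does not depend on $\lambda$. One must also check that $t_0$ lies in the neck interval, which holds for small $\delta$ and small $\lambda$. If the two-point midpoint argument felt awkward, I would alternatively integrate $\Pi$ over the circle: $\Pi(\bar u)$ differs from $\bar u$ by the average of the second-order Taylor terms of $\Pi$ around $u(t_0,\theta)$, which is again $O(\lambda)$.
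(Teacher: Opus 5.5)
Your proof is correct and is essentially the paper's. Both evaluate at $t_0=\frac12\log\lambda$, where $\eta=2\sqrt{\lambda}$, cancel the $\sqrt{\lambda}$-order term using the circle symmetry of $\cos\theta,\sin\theta$, and absorb second-order Taylor terms into $O(\lambda)$; the paper does this by Taylor-expanding local defining functions $\varphi_j$ of $N$ at $p$ and integrating over $\set{t_0}\times S^1$, which is the continuous-average version of your antipodal-midpoint lemma (your $\Pi$ variant is the same computation with defining map $x-\Pi(x)$, which has the minor advantage of giving ${\rm dist}(\cdot,N)$ directly).
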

\begin{proof}
	By the no-neck theorem (see \eqref{eqn:noneck}), the image of $u$ on the cylinder lies in a small neighborhood $U$ of $N$. We can assume that there exist $m=p-\dim N$ functions $\varphi_j:U\to\R$ s.t. $N\cap U= \bigcap_{j=1}^m \varphi_j^{-1}(\set{0})$. 

	Letting $t_0=\frac{1}{2}\log \lambda$, we have
	$$
	\varphi_j(u(t_0,\theta)) = \varphi_j(p) + (\nabla \varphi_j)(p) \left( (a+c) \sqrt{\lambda} \cos \theta + (b+d) \sqrt{\lambda} \sin \theta  \right) + O(2).
	$$
Since $u(t_0,\theta)\in N$, $\varphi_j(u(t_0,\theta))=0$ for all $\theta$. Integrating the above equation over $S^1$, we find
$$
	0 = 2\pi \varphi_j(p) + O(2).
$$
The proof is done.
\end{proof}
As a corollary, we assume without loss of generality that $p$ in \eqref{eqn:order2exp} lies in $N$. 

Now we estimate the normal part of $q$. By the definition of $\varphi_j$ and $p\in N$, the space of normal vectors at $p$ is spanned by $\nabla \varphi_j(p)$ for $j=1,\cdots ,m$.
\begin{lem}\label{lem:smalltangent}
	Let $u$ be as in Lemma \ref{lem:higher}. we have
\begin{equation}
	\label{eqn:normalq}
	q\cdot \nabla \varphi_j(p) = O(2)\quad \forall j.
\end{equation}
\end{lem}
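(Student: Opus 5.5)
Following the argument of Lemma \ref{lem:p}, we compose the expansion \eqref{eqn:order2exp} with the defining functions $\varphi_j$. Since $u$ takes values in $N$, $\varphi_j(u(t,\theta))=0$ on the whole neck. We then average over $S^1$ and compare the averages at two different values of $t$, so that the linear term $q(t-\frac12\log\lambda)$ appears with different coefficients.

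Write $u=p+L+R$, where
$$L=q(t-\tfrac12\log\lambda)+ae^t\cos\theta+be^t\sin\theta+c\lambda e^{-t}\cos\theta+d\lambda e^{-t}\sin\theta$$
and $R=O(2)$. Note that $L=O(1)$, because $\abs{q}\le C\sqrt\lambda$ and $\abs{t-\frac12\log\lambda}\le C\abs{\log\lambda}$, so $q(t-\frac12\log\lambda)=O(1)$ as in the proof of Lemma \ref{lem:higher}. Using $\varphi_j(p)=0$ and Taylor's theorem,
$$0=\varphi_j(u)=\nabla\varphi_j(p)\cdot L+\tfrac12\nabla^2\varphi_j(p)(L,L)+O(2).$$
The quadratic term is only $O(2)$ up to a cross term that needs attention. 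Every product of the $e^{\pm t}$ terms is $O(2)$, since $e^{2t}$, $\lambda$ and $\lambda^2e^{-2t}$ are all $O(\eta^2)$. The term $\abs{q}^2(t-\frac12\log\lambda)^2$ is at most $C\lambda\log^2\lambda$. This is not quite $O(2)$ near the middle of the neck, but after averaging in $\theta$ its coefficient is the fixed number $\frac12\nabla^2\varphi_j(p)(q,q)=O(\lambda)$, which we can keep track of explicitly. The cross terms of $q$ with the $e^{\pm t}$ terms are multiplied by $\cos\theta$ or $\sin\theta$, so they vanish after averaging.

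Averaging over $S^1$ therefore gives, for every $t$ in the neck,
$$0=\nabla\varphi_j(p)\cdot q\,(t-\tfrac12\log\lambda)+\tfrac12\nabla^2\varphi_j(p)(q,q)\,(t-\tfrac12\log\lambda)^2+\tfrac14\big(\nabla^2\varphi_j(p)(a,a)+\nabla^2\varphi_j(p)(b,b)\big)e^{2t}+(\text{similar } \lambda^2e^{-2t}\text{ and }\lambda\text{ terms})+O(\eta^2).$$
We evaluate this at $t_1=\frac12\log\lambda+1$. There $\eta\sim\sqrt\lambda$, every term other than the first is $O(\lambda)$, and $t_1-\frac12\log\lambda=1$. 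Hence $\nabla\varphi_j(p)\cdot q=O(\lambda)$, which is $O(2)$ in the paper's notation since $\lambda=O(2)$.

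The main obstacle is to make sure the $O(2)$ remainder in \eqref{eqn:order2exp} really is of size $C\lambda$ at $t_1$, and that the Taylor remainder of $\varphi_j$ is controlled. The latter is cubic in $L+R$, and we need to bound it by $O(\eta^2)$, which holds because $\abs{L}\le C\eta+C\sqrt\lambda\abs{\log\lambda}$ is small. If the $\log$ factors cause trouble, we would instead evaluate at two points $t_0$ and $t_0+1$ near the center and subtract, which isolates $\nabla\varphi_j(p)\cdot q$ directly. It is precisely the improvement from $O(2-\epsilon)$ to $O(2)$ in Lemma \ref{lem:higher} that makes the bound $C\lambda$ rather than $C\lambda^{1-\epsilon/2}$.
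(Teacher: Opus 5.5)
Your argument is correct, but it takes a slightly different route from the paper's.

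The paper does not use the constraint $\varphi_j(u)=0$ itself. It uses the differentiated tangency relation $\partial_t u\cdot\nabla\varphi_j(u)=0$, evaluated on the central circle $t_0=\frac12\log\lambda$. On that circle the term $q(t-\frac12\log\lambda)$ drops out of $u-p$. So $\nabla\varphi_j(u)$ is $\nabla\varphi_j(p)$ plus oscillating terms of size $\sqrt\lambda$ plus $O(\lambda)$, while $q$ appears directly as the constant term of $\partial_t u$. Averaging in $\theta$ then gives the claim immediately. This route does not use $\varphi_j(p)=0$, but it does rely on the derivative bounds built into the $O(2)$ notation.

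You use the undifferentiated constraint instead. This needs Lemma \ref{lem:p} to dispose of $\varphi_j(p)$, and it forces you off center to $t_1=t_0+1$ so that $q$ gets a nonzero coefficient. In exchange, you only need $C^0$ control of the remainder.

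At $t_1$ you have $\abs{u-p}\le C\sqrt\lambda$, using $\abs{q}\le C\sqrt\lambda$. So a first-order Taylor expansion with remainder $O(\abs{u-p}^2)=O(\lambda)$ already gives $0=\nabla\varphi_j(p)\cdot q+O(\lambda)$ after averaging. Your second-order bookkeeping and the concern about logarithms are unnecessary.

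Two of your side remarks are inaccurate, though neither affects the proof. First, the bound $\abs{t-\frac12\log\lambda}\le C\abs{\log\lambda}$ does not by itself give $q(t-\frac12\log\lambda)=O(1)$. The correct reason is this: with $s=t-\frac12\log\lambda$ one has $\eta=\sqrt\lambda(e^s+e^{-s})\ge\sqrt\lambda e^{\abs{s}}$, hence $\sqrt\lambda\abs{s}\le C\eta$. Second, by the same inequality $\abs{q}^2 s^2\le C\lambda s^2\le C\eta^2$. So that term is in fact $O(2)$ throughout the neck, not just away from its middle.
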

\begin{proof}
	Since $u$ maps into $N$, then for all $(t,\theta)$, we have
\begin{equation}
    \label{eqn:stayonN}
	\partial_t u \cdot \nabla \varphi_j(u) =0.
\end{equation}
When $t=t_0=\frac{1}{2}\log \lambda$, we have the expansion of $\nabla \varphi_j$ at $p$ 
\[
\nabla\varphi_j(u)(t_0,\theta)=\nabla\varphi_j(p)+\nabla^2\varphi_j(p)(\sqrt\lambda((a+c)\cos\theta)+(b+d)\sin\theta)+O(2)
\]
and the expansion of $\partial_t u$ at $p$
$$
\partial_t u (t_0,\theta) =q +  a\sqrt\lambda\cos \theta + b\sqrt\lambda\sin \theta - c\sqrt\lambda  \cos\theta - d\sqrt\lambda\sin \theta + O(2). 	
$$
Plugging them into \eqref{eqn:stayonN}, we obtain
\begin{equation*}
0=q\cdot \nabla \varphi_j(p)+(\cdots)\sin\theta+(\cdots)\cos\theta+O(2),
\end{equation*}
where $(\cdots)$ are something doesn't affect the conclusion. By integrating the above equation over $\set{t_0} \times S^1$, we get the conclusion we need.
\end{proof}

\subsection{The tangent part}

To estimate the tangent part, we first prove a result valid for all harmonic maps into homogeneous space. Recall that $\mathfrak{g}$ is the Lie-algebra of the isometry group of $N$ embedded in $\mathfrak{so}(\Real^l)$.
\begin{lem}
	\label{lem:law}
	Let $\Omega$ be a domain in $M$ and $B$ be any matrix in $\mathfrak g$. For any harmonic map $u$ from $M$ to $N$, we have
	\[
		\int_{\partial\Omega} \frac{\partial u}{\partial\mathbf{n}} \cdot Bu =0,
	\]
	where $\mathbf{n}$ is the outward normal vector of $\Omega$ in $M$.
\end{lem}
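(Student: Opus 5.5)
The plan is to recognize the integrand as the flux of a divergence-free vector field, namely the Noether current attached to the one-parameter isometry group $e^{sB}$, and then apply the divergence theorem on $\Omega$. Set $J^\alpha = \partial_\alpha u \cdot Bu$ (in local coordinates on $M$, with indices raised by the metric). Then
\[
\int_{\partial\Omega}\frac{\partial u}{\partial\mathbf n}\cdot Bu=\int_\Omega \operatorname{div}\big(\nabla u\cdot Bu\big)=\int_\Omega \Delta u\cdot Bu+\int_\Omega \langle\nabla u\cdot B\nabla u\rangle ,
\]
where the last term means $g^{\alpha\beta}\,\partial_\alpha u\cdot B\partial_\beta u$. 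It therefore suffices to show that both integrands on the right vanish pointwise.

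The second term vanishes by skew-symmetry. Since $B\in\mathfrak g\subset\mathfrak{so}(l)$, we have $v\cdot Bw=-w\cdot Bv$ for all $v,w\in\Real^l$. Contracting the antisymmetric form $(v,w)\mapsto v\cdot Bw$ against the symmetric tensor $g^{\alpha\beta}$ gives zero, and in particular each diagonal term $\partial_\alpha u\cdot B\partial_\alpha u$ is zero.

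The first term uses the harmonic map equation $\Delta u = A(u)(\nabla u,\nabla u)$, whose right-hand side is normal to $N$ at $u$, together with the fact that $Bu$ is tangent to $N$ at $u$. The latter is the one step needing justification. Because the embedding is equivariant, $e^{sB}$ maps $N$ into $N$ for every $s$. Hence for each $y\in N$ the curve $s\mapsto e^{sB}y$ lies in $N$, and its velocity at $s=0$, namely $By$, belongs to $T_yN$. Consequently $\Delta u\cdot Bu=0$ pointwise.

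Combining the two observations, $\operatorname{div}(\nabla u\cdot Bu)=0$ on $\Omega$, and the divergence theorem gives the stated identity. Nothing here is delicate once the equivariance of the embedding is granted. The only points to check are that $u$ is smooth up to $\partial\Omega$ (we take $\Omega$ with smooth closure inside $M$, where harmonic maps are smooth), and that the computation is carried out with the Riemannian divergence and Laplacian of $M$. In the applications $M$ is the flat disk, or the cylinder, on which the identity is conformally invariant.
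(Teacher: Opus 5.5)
Your proposal is correct and follows essentially the same route as the paper. You integrate by parts on $\Omega$, kill $\int_\Omega \Delta u\cdot Bu$ because $\Delta u$ is normal while $Bu$ is tangent (via the curve $e^{sB}u\subset N$), and kill the gradient term using skew-symmetry of $B$ pointwise, whereas the paper obtains that term's vanishing by differentiating the energy of $e^{tB}u$ at $t=0$, which is the same fact in integrated form.
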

\begin{proof}
	Let $B$ be any element in $\mathfrak g$.
Since $\exp(tB)$ is an isometry of $\Real^l$ for all $t$, we have
\[
0=\frac{d}{dt}|_{t=0}\int_\Omega\abs{\nabla(\exp(tB) u)}^2=2\int_\Omega\nabla u\cdot\nabla Bu.
\]
Since $B \in \mathfrak{g}$ generates a one-parameter group of isometries $\exp(tB)$ on $N$, the curve $\gamma(t) = \exp(tB)u$ lies on $N$, and $\gamma'(0) = Bu$, we have $Bu \in T_u N$.
By the harmonic map equation of $u$ and the fact that $Bu$ is a tangent vector of $N$ at $u$, we have
$$
\int_\Omega\triangle u\cdot Bu=0.
$$
Integrating by parts, we get
\[
		\int_{\partial\Omega} \frac{\partial u}{\partial\mathbf{n}} \cdot Bu =0.
\]
\end{proof}

Applying the above lemma on the disk, we obtain
\begin{cor}
	Let $t_0=\frac{1}{2}\log \lambda$ and $B$ be any matrix in $\mathfrak g$. For $u$ in Theorem \ref{thm:main}, we have
\begin{eqnarray}
        \label{eqn:new}
		\int_{\set{t_0} \times S^1 } \partial_t u \cdot Bu d\theta =0.
\end{eqnarray}
\end{cor}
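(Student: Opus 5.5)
Apply Lemma \ref{lem:law} with $\Omega$ the disk $B_{e^{t_0}}=B_{\sqrt{\lambda}}\subset B_1$, i.e. the region $\{t<t_0\}$ in cylinder coordinates $x=e^{t}e^{i\theta}$. The boundary is the circle $\{t_0\}\times S^1$, whose outward normal is the radial direction $\partial_r$. We then only need to convert the boundary integral into cylinder coordinates.

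First, since $u$ is a smooth harmonic map on all of $B_1$ (each $u_i$ is), Lemma \ref{lem:law} applies to $\Omega=B_{\sqrt\lambda}$. It gives
$$
\int_{\partial B_{\sqrt\lambda}}\frac{\partial u}{\partial r}\cdot Bu\, ds=0 .
$$
Second, we change variables. With $r=e^t$ we have $\partial_r=e^{-t}\partial_t$, and the arc length is $ds=e^{t}\,d\theta$. The two factors of $e^{\pm t_0}$ cancel, so the integral equals
$$
\int_{\{t_0\}\times S^1}\partial_t u\cdot Bu\,d\theta .
$$
This is exactly \eqref{eqn:new}.

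The only point needing care is that the proof of Lemma \ref{lem:law} is justified on this domain. The first-variation identity uses only that $\exp(sB)$ is a linear isometry of $\R^l$, hence preserves $\abs{\nabla(\cdot)}^2$ pointwise. This differentiation and the subsequent integration by parts are legitimate on a smooth bounded domain for smooth $u$. There is no genuine obstacle; the statement is essentially a specialization of Lemma \ref{lem:law} combined with the conformal invariance of the normal-derivative flux in two dimensions.

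One could equally use the annulus between $t_0$ and $\log\delta$ or any other circle. The choice $t_0$ is made only so that the integral can later be paired with the expansion \eqref{eqn:order2exp} at its symmetric point.
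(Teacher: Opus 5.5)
Your proof is correct and matches the paper's argument: apply Lemma~\ref{lem:law} on the disk $B_{\sqrt{\lambda}}=\{t<t_0\}$, which is legitimate since each $u_i$ is smooth on all of $B_1$, and use $\partial_r\,ds=\partial_t\,d\theta$ on the boundary circle. One correction to your closing remark: the annulus $\{t_0<t<\log\delta\}$ alone only shows that $\int_{\{t\}\times S^1}\partial_t u\cdot Bu\,d\theta$ is independent of $t$, so the vanishing really requires a full disk with no inner boundary (any disk $\{t<t_1\}$ works, but an annulus does not).
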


Substituting the expansion in Lemma \ref{lem:higher} into \eqref{eqn:new}, we get
\begin{eqnarray*}
0&=&\int_{\set{t_0} \times S^1 } \partial_t u \cdot Bu d\theta \\
&=&\int_{\set{t_0} \times S^1 } (q + \sqrt{\lambda}(a-c)\cos \theta+\sqrt{\lambda}(b-d)\sin \theta + O(2)\\
&&\cdot \left(Bp+\sqrt{\lambda}B(a+c)\cos\theta+\sqrt{\lambda}B(b+d)\sin\theta+O(2)\right)d\theta \\
&=& 2\pi \left( q \cdot Bp + \frac{1}{2}\lambda (a-c)\cdot B(a+c) + \frac{1}{2}\lambda (b-d)\cdot B(b+d)  \right) + O(2).
\end{eqnarray*}
Since $B$ is skew-symmetric, we get
$$
	q\cdot B p + \lambda a\cdot Bc + \lambda b\cdot Bd = O(2).
$$
Since $Bp$ can be any tengent vector at $p$, we find that the tangent part of $q$ is $O(2)$.

Combining the normal part and the tangent part, we find that $\abs{q}=O(2)$, which is equivalent to \eqref{eqn:qi}. As pointed out at the beginning of this section, \eqref{eqn:abcd} and \eqref{eqn:adbc} follows from \eqref{eqn:qi} and the proof for Theorem \ref{thm:main} is done.

\section{Proof of Corollary \ref{cor:main}}
\begin{lem}
    Let $u_i$ be as in Theorem \ref{thm:main}. We further assume that $u_i$'s are weakly conformal. Then
\begin{eqnarray}
 \label{eqn:rel1} \abs {a_\infty}=\abs {b_\infty},\quad \abs {c_\infty}&=&\abs {d_\infty}\\
 \label{eqn:rel2}    a_\infty \cdot b_\infty = c_\infty \cdot d_\infty &=& 0 
\end{eqnarray}
\end{lem}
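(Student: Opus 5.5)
Weak conformality of $u_i$ means that in Euclidean coordinates $x=(x_1,x_2)$ on $B_1$ we have $\abs{\partial_{x_1}u_i}^2=\abs{\partial_{x_2}u_i}^2$ and $\partial_{x_1}u_i\cdot\partial_{x_2}u_i=0$. This condition is invariant under conformal changes of the domain. The plan is to pass this pointwise condition to the two limits and read off the first-order Taylor coefficients at the origin.

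\emph{Step 1: the limit map.} Since $u_i\to u_\infty$ smoothly on compact subsets of $\bar B_1\setminus\{0\}$, $u_\infty$ is weakly conformal on $B_1\setminus\{0\}$. By the removable singularity theorem $u_\infty$ is smooth on $B_1$, so by continuity it is weakly conformal on all of $B_1$. Evaluating at the origin and using the expansion $u_\infty=p_\infty+a_\infty x_1+b_\infty x_2+o(\abs{x})$ gives $\partial_{x_1}u_\infty(0)=a_\infty$ and $\partial_{x_2}u_\infty(0)=b_\infty$. Here the $o(\abs x)$ term is differentiable with vanishing derivative at $0$, because $u_\infty$ is smooth and this is just its Taylor expansion. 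Hence $\abs{a_\infty}=\abs{b_\infty}$ and $a_\infty\cdot b_\infty=0$.

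\emph{Step 2: the bubble.} Rescaling preserves weak conformality, so $u_i(\lambda_i y)$ is weakly conformal. Its smooth local convergence to $\omega$ then shows that $\omega$ is weakly conformal on $\R^2$. The inversion $y\mapsto y/\abs{y}^2$ is conformal, though orientation reversing, so $\tilde\omega$ is weakly conformal on $\R^2\setminus\{0\}$. Since $\omega$ has finite energy, $\tilde\omega$ extends smoothly across $0$ by removable singularities; this extension is implicit in the expansion of $\tilde\omega$ stated earlier. By continuity $\tilde\omega$ is weakly conformal at $0$. Reading off its Taylor coefficients as in Step 1 gives $\abs{c_\infty}=\abs{d_\infty}$ and $c_\infty\cdot d_\infty=0$.

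The only point needing care is that the coefficients $a_\infty,\dots,d_\infty$ are exactly the gradients at the origin of the smooth extensions. This follows from the identification of the limits of $a_i,b_i,c_i,d_i$ recalled after Theorem \ref{thm:yin}. Apart from that, the argument is a direct continuity check.
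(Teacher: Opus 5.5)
Your proof is correct and follows the same route as the paper: pass weak conformality to $u_\infty$ and to the inverted bubble $\tilde\omega$, then read off the first-order Taylor coefficients at the origin. You also supply details the paper leaves implicit, namely continuity across the puncture, invariance of weak conformality under dilation and inversion, and removability of the singularity of $\tilde\omega$ at $0$.
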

\begin{proof}
By the assumption that $u_i$ is weakly conformal, we can get the $u_\infty$ is weakly conformal.Consider the expansion at the origin:
\[
u_\infty= p_\infty+a_\infty x_1+b_\infty x_2 + o(\abs{x}).
\]
Since $u_\infty$ is weakly conformal,we have 
\[
\abs{a_\infty}=\abs{b_\infty},a_\infty\cdot b_\infty=0.
\]
Similarly,we can get
\[
\abs{c_\infty}=\abs{d_\infty},c_\infty\cdot d_\infty=0.
\]
\end{proof}

Now we turn to the proof of Corollary \ref{cor:main}.
\begin{proof}[Proof of Corollary \ref{cor:main}]
Denote $\dim N$ by $n$. For simplicity, we omit the subscript $\infty$.
Since $a,b,c,d$ lie in $T_p N$ and \eqref{eqn:abcd} and \eqref{eqn:adbc} are still true when we regard $a,b,c,d$ as vectors in $T_p N$. Therefore, in what follows, we assume that $a,b,c,d$ are all $n$-tuple vectors.

{\bf Case $n=2$:}  By a rotation, we may assume 
\[
a=(1,0), \quad b=(0,1), \quad c=(c_1,c_2), \quad d=(d_1,d_2).
\]
Substituting these in \eqref{eqn:abcd} and \eqref{eqn:adbc}, we can get 
$$c_1=-d_2,\quad c_2=d_1,$$
which means that $\pi_1$ and $\pi_2$ is the same plane with opposite orientations.

{\bf Case $n=3$:}
Again, we assume that
\[
   a=(1,0,0),\quad b=(0,1,0), \quad c=(c_1,c_2,c_3), \quad d=(d_1,d_2,d_3).
\]
By \eqref{eqn:abcd} and \eqref{eqn:adbc}, we get
\[
c_1=-d_2, \quad c_2=d_1,
\]
hence $c_1^2+c_2^2=d_1^2+d_2^2$.

By \eqref{eqn:rel1} , we obtain
$$\abs {c_3}=\abs{d_3},$$
and by \eqref{eqn:rel2}, we obtain
$$c_3\cdot d_3=0,$$
which implies that $c_3=d_3=0$.
Hence, the conclusion is the same as in the case $n=2$.

{\bf Case $n\geq 4$:} There is always a subspace of dimension four containing $\pi_1$ and $\pi_2$. Hence, we may assume $n=4$ and
\[
   a=(1,0,0,0),b=(0,1,0,0),c=(c_1,c_2,c_3,c_4),d=(d_1,d_2,d_3,d_4).
\]
Similar to the previous cases, we still have
\begin{equation}
    \label{eqn:again}
c_1=-d_2, \quad c_2=d_1.
\end{equation}
With the condition \eqref{eqn:rel1} and \eqref{eqn:rel2}, we obtain that
\[
c_3^2+c_4^2=d_3^2+d_4^2, \quad c_3\cdot d_3+c_4\cdot d_4=0.
\]

For any nonzero vector $v\in \pi_2$, there exists $(s,t)\in \R^2$ with $s^2+t^2>0$ such that
\[
v= s c + td.
\]
Its projection onto $\pi_1$ is
\[
\nu=( sc_1+td_1, sc_2+td_2,0,0)
\]
By \eqref{eqn:again}, we have
\[
\nu=(sc_1-tc_2,sc_2+tc_1,0,0).
\]
Due to \eqref{eqn:rel1} and \eqref{eqn:rel2}, the cosine of the angle between $v$ and $\pi_1$ is

\begin{eqnarray*}
\cos\alpha&=&\frac{\abs\nu}{\abs{sc+td}}\\
          &=&\frac{\sqrt{(s^2+t^2)(c_1^2+c_2^2)}}{\sqrt{(s^2+t^2)}\abs{c}} \\
          &=&\frac{\sqrt{(c_1^2+c_2^2)}}{\abs{c}},
\end{eqnarray*}
which is independent of $s,t$. Hence $\pi_1,\pi_2$ are isoclinic.
\end{proof}

\bibliography{foo}
\bibliographystyle{alpha}

\end{document}